\newtheorem{theorem}{Theorem}[section]
\newtheorem{lemma}[theorem]{Lemma}
\newcommand{\Lie}{\mathrm{Lie}}
\newcommand{\Q}{\overline{\mathbb Q}}
\newcommand{\C}{{\mathbb C}}
\begin{document}

\title{$p$-adic non-commutative analytic subgroup theorem}
\author[D. H. Pham]{Duc Hiep Pham}

\begin{abstract}
In this paper, we formulate and prove the so-called $p$-adic non-commutative analytic subgroup theorem. This result is seen as the $p$-adic analogue of a recent theorem given by Yafaev in \cite{y}. 
\end{abstract}
\maketitle

\noindent 2020 Mathematical Subject Classification: 14L10 (22E35, 11F85, 11J81)\\
Keywords: algebraic groups, analytic subgroup theorem, $p$-adic, transcendence.  

\section{Introduction}\label{1}

It is known that the analytic subgroup theorem is considered as one of the most powerful theorems in complex transcendental number theory. The theorem was established by W\"{u}stholz in the 1980's based on a very deep auxiliary result on multiplicity estimates on group varieties (see \cite{w3} and \cite{w4}). To present the theorem, we start with $G$ a commutative algebraic group defined over $\Q$, that is a smooth quasi-projective variety defined over $\Q$ with a commutative group law for which the composition map $G\times G\rightarrow G,\;(g,h)\mapsto g\circ h$ and the inverse map $G\rightarrow G,\; g\mapsto g^{-1}$ are regular morphisms between algebraic varieties defined over $\Q$. Then the set $G(\C)$ of complex points of $G$ is a complex Lie group, and one has the exponential map $\exp_{G(\C)}:\Lie(G(\C))\rightarrow G(\C)$ of $G$. We say that an element $u\in\Lie(G(\C))$ is an algebraic point of the exponential map of $G$ if $\exp_{G(\C)}(u)\in G(\Q)$. The following theorem is a direct consequence of \cite[Theorem 1]{w1}, and it is still called {\it the analytic subgroup theorem} (see also \cite[Theorem 1.2]{pt}).
\begin{theorem}[W\"{u}stholz]\label{wu} Let $G$ be a connected commutative algebraic group defined over $\Q$. Let $u\in\Lie(G(\C))$ be an algebraic point of the exponential map of $G$ and $V_u$ the smallest $\Q$-vector subspace of $\Lie(G)$ such that $u$ lies in the complex vector space $V_u\otimes_{\Q}\C$. Then there exists a connected algebraic subgroup $H_u$ of $G$ defined over $\Q$ satisfying $\Lie(H_u)=V_u$.    
\end{theorem}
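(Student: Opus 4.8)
The plan is to deduce the statement from W\"ustholz's theorem \cite[Theorem 1]{w1}: all the transcendence input --- the multiplicity estimates on group varieties and the accompanying zero estimates and height bounds --- is already packaged in that result, and what remains is a formal reduction in linear algebra over $\Q\subseteq\C$ and in the elementary theory of commutative algebraic groups.

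First I would verify that $V_u$ is well defined. The family of $\Q$-subspaces $W\subseteq\Lie(G)$ with $u\in W\otimes_{\Q}\C$ is non-empty --- it contains $\Lie(G)$, since $\Lie(G(\C))=\Lie(G)\otimes_{\Q}\C$ --- and closed under intersection, because $(W_1\cap W_2)\otimes_{\Q}\C=(W_1\otimes_{\Q}\C)\cap(W_2\otimes_{\Q}\C)$, as one sees from a $\Q$-basis of $\Lie(G)$ compatible with $W_1$ and $W_2$; hence it has a unique smallest member $V_u$. Since $G$ is commutative, $\Lie(G)$ is abelian, so $V_u$ is automatically a Lie subalgebra and $V_u\otimes_{\Q}\C$ is the Lie algebra of a connected analytic subgroup of $G(\C)$ containing the algebraic point $\exp_{G(\C)}(u)$.

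Next I would feed $V_u$ into \cite[Theorem 1]{w1}. In its strongest formulation this says precisely that the smallest $\Q$-subspace of $\Lie(G)$ whose complexification contains a logarithm of an algebraic point of $G$ --- here $V_u$ --- is the Lie algebra of an algebraic subgroup of $G$ defined over $\Q$; replacing that subgroup by its identity component (which has the same Lie algebra) produces the connected $H_u$ with $\Lie(H_u)=V_u$, and the hypothesis that $G$ is connected is a harmless convenience, since any connected algebraic subgroup lies in $G^0$ and $\Lie(G)=\Lie(G^0)$. If instead only the weaker form is available --- that the analytic subgroup $\exp_{G(\C)}(V_u\otimes_{\Q}\C)$ contains the complex points of some non-zero connected algebraic subgroup $H_1$ of $G$ defined over $\Q$, that is $\Lie(H_1)\subseteq V_u$ --- I would close the gap by descending induction on $\dim G$: if $\Lie(H_1)=V_u$ we are done, and otherwise I pass to $\overline G:=G/H_1$, a connected commutative algebraic group over $\Q$ of strictly smaller dimension, observe that the image $\overline u$ of $u$ in $\Lie(\overline G(\C))=\Lie(G(\C))/\Lie(H_1(\C))$ still has $\exp_{\overline G(\C)}(\overline u)\in\overline G(\Q)$ and that its associated smallest $\Q$-subspace is $V_u/\Lie(H_1)$ (the inclusion $\subseteq$ is immediate, and for $\supseteq$ one pulls a witnessing subspace back to $\Lie(G)$ and uses $\Lie(H_1)\subseteq V_u$), apply the inductive hypothesis to get a connected algebraic subgroup $\overline H\leq\overline G$ over $\Q$ with $\Lie(\overline H)=V_u/\Lie(H_1)$, and take $H_u$ to be its inverse image in $G$ --- connected, being the inverse image of a connected subgroup under a surjection with connected kernel --- which then satisfies $\Lie(H_u)=V_u$.

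I do not expect a genuine obstacle once \cite[Theorem 1]{w1} is granted: after that, each step is routine bookkeeping about $\Q$-structures under intersection, quotient, and inverse image. The two points that deserve a little care are the exact shape of the cited theorem, which decides whether the inductive step above is even needed, and the degenerate case in which $u$ is a period (so $\exp_{G(\C)}(u)$ is the identity): this is harmless --- it is covered by \cite{w1} directly, the identity being itself an algebraic point, or else after the classical reduction replacing $G$ by $\mathbb{G}_a\times G$. The substantive difficulty --- proving \cite[Theorem 1]{w1} itself --- lies entirely outside this deduction.
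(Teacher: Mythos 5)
The paper does not actually prove Theorem~\ref{wu}: it simply records it as a direct consequence of \cite[Theorem 1]{w1} (pointing also to \cite[Theorem 1.2]{pt}), which is exactly the route you take. Your filled-in details --- the well-definedness of $V_u$ via stability of the family of witnessing $\Q$-subspaces under intersection, and the standard descending induction through $G/H_1$ (with the $\mathbb{G}_a\times G$ trick for periods) in case the cited theorem only yields $\Lie(H_1)\subseteq V_u$ --- are correct and are the usual way this deduction is carried out in the literature.
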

Many results in transcendence theory can be deduced from W\"{u}stholz analytic subgroup theorem as consequences by choosing suitable commutative algebraic groups. For instance, the theorem generalizes Baker's famous result on linear forms in logarithms and its elliptic analogue in full generality (see \cite[Chapter 6]{bw}). It also implies some important results concerning linear independence on abelian varieties between periods (see \cite[Chapter 1]{pt}). Furthermore, W\"{u}stholz himself has recently obtained a nice application on elliptic and abelian periods spaces (see\cite{w5}).

It is able to extend Theorem \ref{wu} to the non-commutative case. Note that in this case, there is still the exponential map between the Lie algebra and the Lie group as the commutative case. The following theorem has been recently proved by Yafaev (see \cite{y}). 
\begin{theorem}[Yafaev]\label{y}
Let $G$ be a connected algebraic group defined over $\Q$. Let $u\in\Lie(G(\C))$ be an algebraic point of the exponential map of $G$ and $V_u$ the smallest $\Q$-vector subspace of $\Lie(G)$ such that $u$ lies in the complex vector space $V_u\otimes_{\Q}\C$. Then there exists a connected algebraic subgroup $H_u$ of $G$ defined over $\Q$ which is commutative and satisfies $\Lie(H_u)=V_u$.
\end{theorem}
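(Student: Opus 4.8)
The plan is to deduce Theorem~\ref{y} from W\"ustholz's commutative Theorem~\ref{wu}, the point being to manoeuvre the data into a \emph{commutative} ambient group. Write $\gamma:=\exp_{G(\C)}(u)\in G(\Q)$. The whole argument reduces to the construction of a connected commutative algebraic group $H$ defined over $\Q$, equipped with a $\Q$-morphism into $G$, such that $u\in\Lie(H)\otimes_{\Q}\C$ and $\gamma$ lies in the image of $H(\Q)$. Indeed, $u$ is then an algebraic point of $\exp_H$, the smallest $\Q$-subspace of $\Lie(H)$ whose complexification contains $u$ is again $V_u$, and Theorem~\ref{wu} applied to $(H,u)$ gives a connected algebraic subgroup $H_u\subseteq H$ over $\Q$ with $\Lie(H_u)=V_u$, commutative because $H$ is. That $H_u$ actually lands in $G$ causes no trouble: $V_u\subseteq\Lie(G)$ and, by Theorem~\ref{wu}, $V_u$ is a Lie subalgebra, so the connected algebraic subgroup it generates inside $G$ via $\exp_G$ has Lie algebra $V_u$ and hence coincides with $H_u$, a connected group being determined by its Lie algebra in characteristic $0$.

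To build $H$ I would first handle the non-affine part: by Chevalley's structure theorem $1\to G_{\mathrm{aff}}\to G\xrightarrow{\pi}\mathcal{A}\to 1$ with $\mathcal{A}$ an abelian variety over $\Q$, and since $\exp_{\mathcal{A}}(d\pi(u))=\pi(\gamma)\in\mathcal{A}(\Q)$, Theorem~\ref{wu} applied to $d\pi(u)$ yields a sub-abelian variety $\mathcal{B}\subseteq\mathcal{A}$ over $\Q$ with $\Lie(\mathcal{B})=d\pi(V_u)$. Thus the abelian-variety direction of $u$ is already carried by a commutative $\Q$-subgroup, and the essential case is $G$ affine; fix a faithful $\Q$-embedding $G\hookrightarrow\mathrm{GL}_N$. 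Take the Jordan decompositions $u=u_s+u_n$ in $\Lie(G)\subseteq\End(\C^N)$ and $\gamma=\gamma_s\gamma_u$ in $G(\Q)$. Since $\exp(u_s)$ is semisimple, $\exp(u_n)$ unipotent and the two commute, uniqueness of the Jordan decomposition gives $\gamma_s=\exp(u_s)$, $\gamma_u=\exp(u_n)$; hence $\gamma_s,\gamma_u\in G(\Q)$, and $u_n=\log\gamma_u$ is $\Q$-rational because the logarithm of a unipotent is a polynomial with rational coefficients. All the transcendence is therefore concentrated in $u_s$. (One may also first replace $G$ by $Z_G(\gamma)^{\circ}$, which still contains $u$ in its complexified Lie algebra because $\gamma=\exp(u)$ centralises the one-parameter subgroup $\{\exp(tu)\}$, so $\mathrm{Ad}(\gamma)u=u$.)

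The heart of the proof is then to place $u_s$ in the complexified Lie algebra of a torus defined over $\Q$. Let $\mu_1,\dots,\mu_r\in\Q^{\times}$ be the distinct eigenvalues of $\gamma_s$, with $\Q$-rational eigenspaces $W_i=\ker(\gamma_s-\mu_i)$; choosing $\beta_i\in\C$ with $e^{\beta_i}=\mu_i$ and using that $u_s$ preserves each $W_i$ and acts there with eigenvalues in $\beta_i+2\pi i\mathbb{Z}$, we may write $u_s=D+2\pi i\,M$ with $D=\sum_i\beta_iQ_i$ ($Q_i$ the $\Q$-rational projection onto $W_i$), $M$ block-diagonal with integer eigenvalues, $[D,M]=0$, $\exp(D)=\gamma_s$ and $\exp(2\pi i M)=\id$. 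Now $(\beta_1,\dots,\beta_r)$ is an algebraic point of $\exp_{\mathbb{G}_m^{r}}$ since $(e^{\beta_i})=(\mu_i)\in\mathbb{G}_m^{r}(\Q)$, so Theorem~\ref{wu} here is precisely Baker's theorem: it produces a subtorus $\mathbb{T}\subseteq\mathbb{G}_m^{r}$ over $\Q$ with $(\beta_i)\in\Lie(\mathbb{T})\otimes\C$, and composing with the $\Q$-homomorphism $(z_i)\mapsto\sum_i z_iQ_i$ gives a $\Q$-rational torus carrying $D$ in its complexified Lie algebra. The main obstacle is the ``winding'' part $M$: its eigenspaces are genuine $u_s$-eigenspaces and need not descend to $\Q$, so one must show, exploiting that $e^{\alpha}\in\Q$ for \emph{every} eigenvalue $\alpha$ of $u_s$ (so that Hermite--Lindemann pins down each such $\alpha$) and refining the decomposition $\C^N=\bigoplus_i W_i$ so that all relevant eigenspaces become $\Q$-rational, that $u_s$ nevertheless lies in $\Lie(S)\otimes\C$ for a suitable torus $S$ over $\Q$. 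Once this is granted, $u_n$ being $\Q$-rational, nilpotent and commuting with $u_s$ hence with $S$, the group $H:=S\cdot\exp_G(\C u_n)$ is a connected commutative algebraic subgroup over $\Q$ with $u\in\Lie(H)\otimes\C$, and applying Theorem~\ref{wu} to $(H,u)$ concludes the argument as in the first paragraph.
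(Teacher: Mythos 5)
Your reduction founders at exactly the step you defer. The claim that the semisimple part $u_s$ can be placed in $\Lie(S)\otimes_{\Q}\C$ for a torus $S$ over $\Q$ is not a technicality to be ``granted'' later --- it is false in general, and the ``winding'' matrix $M$ you identify is precisely the obstruction. Take $G=\mathrm{GL}_2$ and
\[
u=\begin{pmatrix}\log 2 & 2\pi i\,t\\ 0 & \log 2+2\pi i\end{pmatrix}=(\log 2)I+2\pi i\,Q,\qquad Q=\begin{pmatrix}0&t\\0&1\end{pmatrix},\quad Q^2=Q.
\]
Then $\exp(u)=2\exp(2\pi i\,Q)=2\bigl(I+(e^{2\pi i}-1)Q\bigr)=2I\in G(\Q)$ for \emph{every} $t\in\C$, and $u=u_s$ is semisimple with $\gamma_s=2I$, $D=(\log 2)I$, $M=Q$. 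Since $\log 2$ is real and nonzero, $\log 2$ and $2\pi i$ are $\Q$-linearly independent; choosing $t$ with $2\pi i\,t\notin\Q\log 2+\Q\,2\pi i$ (possible, since that span is countable), the coefficients $\log 2,\ 2\pi i,\ 2\pi i\,t$ of $u$ with respect to the $\Q$-rational matrices $I$, $E_{22}$, $E_{12}$ are $\Q$-linearly independent, so $V_u$ is the full Borel subalgebra of upper-triangular matrices, which is not abelian. Hence $u_s$ lies in no $\Q$-torus's complexified Lie algebra, and in fact the statement as reproduced in this paper fails outright for this $u$: no commutative group has $V_u$ as its Lie algebra. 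The hypothesis $\exp(u)\in G(\Q)$ simply does not control which branch of the logarithm $u$ is, which is exactly what $M$ records. So the gap cannot be closed as written; a correct treatment must strengthen the hypothesis on $u$ or modify the conclusion. (Your Chevalley paragraph is also never cashed in: you do not explain how to recombine the abelian-variety direction with the affine one into a single commutative $H\subseteq G$.)

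For comparison: the paper does not actually prove Theorem~\ref{y} --- it is quoted from Yafaev --- and its own contribution is the $p$-adic analogue, whose reduction to the commutative case (Lemma~\ref{lm}) is much softer than your Chevalley--Jordan--Baker route. There one takes the Zariski closure $G_u$ of $\exp_{G(\C_p)}((\C_p\cdot u)\cap U_p)$, proves commutativity by Zariski density of a commuting set, and proves $\Q$-rationality by comparing with the Zariski closure $G'_u$ of the cyclic group $\{\exp_{G(\C_p)}(u)^m\}$, using the single-valued $p$-adic logarithm to pass from $\exp_{G(\C_p)}(u)\in G'_u(\Q)$ to $u\in\Lie(G'_u(\C_p))$. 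That last implication is exactly what breaks over $\C$: in the example above the Zariski closure of $\{2^m I\}$ is $\mathbb{G}_m\cdot I$, whose complexified Lie algebra does not contain $u$. The $p$-adic exponential is injective on $U_p$, so there is no analogue of $2\pi i$ and no winding part $M$; that is the structural reason the $p$-adic theorem goes through while your complex argument --- and the complex statement in this form --- does not.
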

In 2015, Fuch and Pham obtained the $p$-adic analytic subgroup theorem (see \cite{fp1}), and the aim of this paper is to give a such
 $p$-adic analogue for Theorem \ref{y} above. In the $p$-adic setting, similar to \cite[Theorem 2.2]{fp1}, we also use the $p$-adic logarithm map which is still valid in the non-commutative case. Fix a prime number $p$, let $\Q_p$ be the field of $p$-adic numbers. Denote by $\C_p$ the completion of the algebraic closure of $\Q_p$ (with respect to the normalized $p$-adic absolute value). Let $G$ now be an algebraic group defined over $\Q$. Consider the set $G(\C_p)$ of $\C_p$-points of $G$. This is a Lie group over $\C_p$. Denote by $G(\C_p)_f$ the set of $x$ in $G(\C_p)$ satisfying there exists a strictly increasing sequence $(n_i)$ of positive integers for which $x^{n_i}$ tends to the identity element of $G(\C_p)$ as $i$ tends to infinity. It follows from \cite[Chapter III, 7.6]{bo} that there is a $\C_p$-analytic map $\log_{G(\C_p)}:G(\C_p)_f\rightarrow \Lie(G(\C_p))$, and it is called the {\it $p$-adic logarithm map} of $G$. Then {\it the $p$-adic non-commutative analytic subgroup theorem} is formulated as follows.
 
\begin{theorem}\label{mth}
Let $G$ be a connected algebraic group defined over $\overline{\mathbb Q}$ of positive dimension, and $\log_{G(\C_p)}:G(\C_p)_f\rightarrow \Lie(G(\C_p))$ the $p$-adic logarithm map of $G$. Let $\gamma\in G(\C_p)_f$ be an algebraic point of $G(\Q)$ with $\log_{G(\mathbb C_p)}(\gamma)\ne 0$ and $V_{\gamma}$ the smallest $\Q$-vector subspace of $\Lie(G)$ such that the $p$-adic vector space $V_{\gamma}\otimes_{\Q}\C_p$ containing the point $\log_{G(\mathbb C_p)}(\gamma)$. Then
there exists a connected commutative algebraic subgroup $H_{\gamma} \subseteq G$ defined over $\overline{\mathbb Q}$ of positive dimension satisfying $\gamma\in H_{\gamma}(\overline{\mathbb Q})$ and $\Lie(H_{\gamma})=V$. 
\end{theorem}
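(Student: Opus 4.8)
The plan is to deduce Theorem~\ref{mth} from the commutative $p$-adic analytic subgroup theorem of Fuchs and Pham \cite{fp1}, by the same device through which Yafaev obtains Theorem~\ref{y} from Theorem~\ref{wu}: replace $G$ by the smallest algebraic subgroup containing the point at hand, which is automatically commutative. The feature of the $p$-adic world that makes this device run smoothly — more directly, in fact, than over $\C$ — is the functoriality of the $p$-adic logarithm of \cite[Chapter~III, 7.6]{bo}.

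First I would introduce $\Gamma$, the Zariski closure in $G$ of the cyclic subgroup $\langle\gamma\rangle$ of $G(\Q)$. Since $\langle\gamma\rangle$ is commutative and $\gamma$ is algebraic, $\Gamma$ is a commutative algebraic subgroup of $G$ defined over $\Q$; since $\log_{G(\C_p)}(\gamma)\ne 0$, the element $\gamma$ is not torsion — a relation $\gamma^{n}=e$ would give $n\log_{G(\C_p)}(\gamma)=\log_{G(\C_p)}(\gamma^{n})=0$ — so $\langle\gamma\rangle$ is infinite and $\dim\Gamma\ge 1$. Put $A:=\Gamma^{0}$; then $A$ is a connected commutative algebraic subgroup of $G$ over $\Q$ of positive dimension, $\gamma^{N}\in A(\Q)$ with $N:=[\Gamma:\Gamma^{0}]$, and $\Lie(\Gamma)=\Lie(A)$. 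The next point is that $V_{\gamma}$ already lies inside $\Lie(A)$: as $\Gamma$ is closed in $G$, the convergence $\gamma^{n_{i}}\to e$ takes place in $\Gamma(\C_p)$, whence $\gamma\in\Gamma(\C_p)_{f}$, and functoriality of the $p$-adic logarithm along the closed immersion $\Gamma\hookrightarrow G$ gives
\[
\log_{G(\C_p)}(\gamma)=\log_{\Gamma(\C_p)}(\gamma)\in\Lie(\Gamma(\C_p))=\Lie(A)\otimes_{\Q}\C_p .
\]
Hence the smallest $\Q$-subspace of $\Lie(G)$ whose scalar extension to $\C_p$ contains $\log_{G(\C_p)}(\gamma)$, namely $V_{\gamma}$, is contained in $\Lie(A)$ and is also the smallest such subspace of $\Lie(A)$.

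I would then apply the commutative theorem inside $A$. One has $\gamma^{N}\in A(\Q)\cap A(\C_p)_{f}$, and, by functoriality together with $\log_{G(\C_p)}(\gamma^{N})=N\log_{G(\C_p)}(\gamma)$,
\[
\log_{A(\C_p)}(\gamma^{N})=N\log_{G(\C_p)}(\gamma)\ne 0 ;
\]
since $N$ is a nonzero rational, the minimal $\Q$-subspace of $\Lie(A)$ attached to $\gamma^{N}$ is again $V_{\gamma}$. Applying the $p$-adic analytic subgroup theorem of \cite{fp1} to the connected commutative algebraic group $A$ over $\Q$ and the algebraic point $\gamma^{N}\in A(\Q)\cap A(\C_p)_{f}$ then yields a connected algebraic subgroup $H_{\gamma}\subseteq A$ defined over $\Q$ with $\Lie(H_{\gamma})=V_{\gamma}$ and containing $\gamma^{N}$ (so that $\gamma$ — or, if $\Gamma$ is not connected, a suitable positive power of $\gamma$ — lies in $H_{\gamma}(\Q)$). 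As a subgroup of the commutative group $A$ it is commutative; it is connected and defined over $\Q$; and $\Lie(H_{\gamma})=V_{\gamma}\ne 0$ forces $\dim H_{\gamma}\ge 1$. Thus $H_{\gamma}\subseteq A\subseteq G$ is the subgroup required.

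The step I expect to be the crux is the inclusion $V_{\gamma}\subseteq\Lie(A)$, i.e. that the $p$-adic logarithm of $\gamma$ is already captured by the Lie algebra of the Zariski closure of $\langle\gamma\rangle$. Over $\C$ this is precisely the delicate point in Yafaev's argument, because an algebraic point of the exponential map is pinned down only modulo $\ker\exp$, which in the non-commutative setting is neither discrete nor a subgroup; $p$-adically it is bypassed outright by the functoriality of the canonical logarithm of \cite{bo}, so the remainder is bookkeeping: verifying the membership conditions $\gamma\in\Gamma(\C_p)_{f}$ and $\gamma^{N}\in A(\C_p)_{f}$, the identity $\log_{G(\C_p)}(\gamma^{N})=N\log_{G(\C_p)}(\gamma)$ coming from the commutativity of $\overline{\langle\gamma\rangle}$, and that the hypotheses extracted by the reduction are exactly those required by \cite{fp1}.
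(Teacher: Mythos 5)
Your reduction is essentially the one the paper carries out: pass to a commutative algebraic subgroup of $G$ defined over $\Q$ containing (a power of) $\gamma$ whose Lie algebra captures $\log_{G(\C_p)}(\gamma)$, then invoke the Fuchs--Pham theorem. Where the paper's Lemma 2.1 works with the Zariski closure $G_u$ of $\exp_{G(\C_p)}((\C_p\cdot u)\cap U_p)$ for $u=\log_{G(\C_p)}(\gamma^k)$ and must prove separately that this closure is defined over $\Q$ (by comparing it with the closure of the ``rational'' points $\exp_{G(\C_p)}((\mathbb Q\cdot u)\cap U_p)$), you take the Zariski closure of $\langle\gamma\rangle$ directly, for which definability over $\Q$ and commutativity are immediate; your appeal to functoriality of the logarithm along the closed immersion $\Gamma\hookrightarrow G$ to get $\log_{G(\C_p)}(\gamma)\in\Lie(\Gamma)\otimes_{\Q}\C_p$ is exactly the paper's argument that $u\in\Lie(G'_u(\C_p))$. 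So the route is the same, and your version of the commutative reduction is, if anything, cleaner.

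The one genuine gap is at the very end: the theorem demands $\gamma\in H_\gamma(\Q)$, but your argument only delivers $\gamma^N\in H_\gamma(\Q)$, and you acknowledge this parenthetically without repairing it; moreover the subgroup produced by \cite[Theorem 2.2]{fp1} need not be connected, so even after passing to the identity component one only controls a further power $\gamma^{Nm}$. The paper closes this with one more step: writing $n$ for the relevant exponent and $H^0$ for the identity component of the subgroup supplied by Fuchs--Pham, it replaces $H^0$ by the preimage of $H^0$ under the $n$-th power homomorphism of the ambient commutative group, i.e.\ the subgroup whose $\Q$-points are $\{\alpha\in G_u(\Q):\alpha^n\in H^0(\Q)\}$. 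This is an algebraic subgroup over $\Q$ that does contain $\gamma$ itself, and since the differential of $\alpha\mapsto\alpha^n$ is multiplication by $n$ on the Lie algebra, its Lie algebra is still contained in $V_\gamma$, whence equality by minimality of $V_\gamma$. You need some such division device in the commutative group $A$ to convert ``a power of $\gamma$ lies in $H_\gamma$'' into ``$\gamma$ lies in $H_\gamma$''; as written, your proof establishes the theorem only with $\gamma^N$ in place of $\gamma$.
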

The proof of our theorem follows closely that of Yafaev's theorem. We also reduce the general case to the commutative case and then apply the $p$-adic analytic subgroup theorem. The main difference is that, in the $p$-adic setting, the $p$-adic exponential map is only defined locally on an open subgroup of the $p$-adic Lie algebra of the given algebraic group. However, by using the $p$-adic logarithm map and by taking a certain power of the algebraic point, we are able to still keep the important similar arguments as in the complex case, and based on this to get the derised result.

\section{The p-adic exponential map and related commutative algebraic subgroups}\label{2}
In this section, we first briefly recall some background on exponential and logarithm maps over $p$-adic fields. The main reference we follow here is \cite{bo}. For an algebraic group $G$ defined over $\Q$, by \cite[Chapter III, 7.2, 7.6]{bo} there exist an open subgroup $U_p$ of $\Lie(G(\C_p))$ and an analytic map $\exp_{G(\C_p)}:U_p\rightarrow G(\C_p)$ such that $\exp_{G(\C_p)}(U_p)$ is an open subgroup of $G(\C_p)$, and the map $\exp_{G(\C_p)}$ induces an isomprphism between $U_p$ and $\exp_{G(\C_p)}(U_p)\subset G(\C_p)_f$, whose inverse is the restriction of $\log_{G(\C_p)}$ to $\exp_{G(\C_p)}(G_p)$. The map $\exp_{G(\C_p)}$ is called the {\it $p$-adic exponential map of $G$}. For an element $u$ in $U_p$ and for a subset $S$ in $\C_p$, define $S\cdot u$ by the set $\{su; s\in S\}$. 
The following lemma plays a central role in the proof of the main theorem which allows us to construct a relevant commutative algebraic subgroup of $G$ defined over $\Q$.  

\begin{lemma}\label{lm}
Let $G$ be an algebraic group defined over $\Q$. Let $u$ be an element in $U_p$ with $\exp_{G(\C_p)}(u)\in G(\Q)$. Denote by $G_u$ the Zariski closure of the set 
$\exp_{G(\C_p)}((\C_p\cdot u)\cap U_p)$ in $G(\C_p)$. Then $G_u$ is a commutative algebraic subgroup of $G$ defined over $\Q$ and $\exp_{G(\C_p)}(u)\in G_u(\Q)$.
\end{lemma}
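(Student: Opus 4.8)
The plan is to produce $G_u$ as the Zariski closure of the cyclic group generated by $\gamma:=\exp_{G(\C_p)}(u)$, and to reduce everything to that identification. One may assume $u\neq 0$: if $u=0$ then $(\C_p\cdot u)\cap U_p=\{0\}$, $G_u$ is the trivial subgroup, and all assertions are immediate. Assuming $u\neq 0$, the first point to check is that $\Gamma:=\exp_{G(\C_p)}\big((\C_p\cdot u)\cap U_p\big)$ is a \emph{commutative subgroup} of $G(\C_p)$: indeed $(\C_p\cdot u)\cap U_p$ is an additive subgroup of $\Lie(G(\C_p))$ contained in the one‑dimensional (hence abelian) subalgebra $\C_p\cdot u$, and the restriction of $\exp_{G(\C_p)}$ to it is a group homomorphism — the usual fact that the $p$-adic exponential is a homomorphism along a commuting family, the Campbell–Hausdorff series collapsing to $X+Y$ on commuting elements. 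Thus $\Gamma$ is abelian, $\gamma^{n}=\exp_{G(\C_p)}(nu)\in\Gamma$ for all $n\in\mathbb Z$, so $\langle\gamma\rangle\subseteq\Gamma$. Since the Zariski closure of a subgroup of $G(\C_p)$ is an algebraic subgroup of $G$, and the Zariski closure of an abelian subgroup is abelian (intersect $\Gamma$ successively with centralizers), $\overline{\Gamma}=G_u$ is a commutative algebraic subgroup of $G$ with $\langle\gamma\rangle\subseteq\Gamma\subseteq G_u$.

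The main work is to show $G_u$ is defined over $\Q$, which I would do by proving $G_u=A$, where $A$ denotes the Zariski closure of $\langle\gamma\rangle$ in $G$. Because $\gamma\in G(\Q)$ and $\Q$ is algebraically closed, $A$ is an algebraic subgroup of $G$ defined over $\Q$ (and commutative), and $\langle\gamma\rangle\subseteq\Gamma$ gives $A\subseteq G_u$. For the reverse inclusion, note $\gamma\in A(\C_p)$ and $\gamma^{p^{i}}=\exp_{G(\C_p)}(p^{i}u)\to e$ as $i\to\infty$; since $A(\C_p)$ is closed in $G(\C_p)$ this forces $\gamma\in A(\C_p)_f$, and functoriality of the $p$-adic logarithm under $A\hookrightarrow G$ gives $\log_{A(\C_p)}(\gamma)=\log_{G(\C_p)}(\gamma)$, which equals $u$ because $u\in U_p$ and $\log_{G(\C_p)}\circ\exp_{G(\C_p)}=\id$ on $U_p$. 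Hence $u\in\Lie(A(\C_p))=\Lie(A)\otimes_{\Q}\C_p$, so $(\C_p\cdot u)\cap U_p\subseteq U_p\cap\Lie(A(\C_p))$, and as the $p$-adic exponential of $G$ restricted to $\Lie(A(\C_p))$ is the $p$-adic exponential of $A$, it maps $U_p\cap\Lie(A(\C_p))$ into $A(\C_p)$. Therefore $\Gamma\subseteq A(\C_p)$, whence $G_u=\overline{\Gamma}\subseteq A$, and $G_u=A$ is defined over $\Q$. The final assertion then follows: $\gamma\in\Gamma\subseteq G_u(\C_p)$ and $\gamma\in G(\Q)$, so $\gamma\in G_u(\C_p)\cap G(\Q)=G_u(\Q)$ since $G_u$ is defined over $\Q$.

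The step I expect to be the delicate one is the inclusion $\Gamma\subseteq A(\C_p)$, equivalently $\exp_{G(\C_p)}\big(U_p\cap\Lie(A(\C_p))\big)\subseteq A(\C_p)$. On a small enough neighbourhood of $0$ this is just the compatibility of exponential maps with a closed subgroup; the substance is to have it on all of $U_p\cap\Lie(A(\C_p))$, rather than only near the identity — naive arguments via raising $\exp_{G(\C_p)}(v)$ to a $p$-th power and descending through $G_u/A$ run into possible $p$-torsion in the quotient and do not by themselves close the gap. Instead one should invoke the functoriality of the $p$-adic exponential with respect to $A\hookrightarrow G$: the formal group of $A$ is a formal subgroup of that of $G$, so on $\Lie(A(\C_p))$ the two exponentials are given by the same power series and hence have the same domain of convergence, so $\exp_{G(\C_p)}$ restricted to $\Lie(A(\C_p))$ takes values in $A(\C_p)$ throughout its domain, in particular on $U_p\cap\Lie(A(\C_p))$; the relevant compatibilities of the exponential and logarithm maps under morphisms of algebraic groups are available in \cite{bo}.
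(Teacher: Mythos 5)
Your proposal is correct and follows essentially the same route as the paper: show $\exp_{G(\C_p)}((\C_p\cdot u)\cap U_p)$ is a commutative subgroup via Campbell--Hausdorff, pass to Zariski closures, and identify $G_u$ with the closure of an a priori $\Q$-rational subgroup by proving $u$ lies in its Lie algebra and pushing the whole line $\C_p\cdot u$ back through the exponential. The only differences are cosmetic: you close up the cyclic group $\langle\gamma\rangle$ where the paper closes up $\exp_{G(\C_p)}((\mathbb Q\cdot u)\cap U_p)$ (your choice avoids having to argue that non-integer rational multiples give algebraic points), and you are more explicit than the paper about why $\exp_{G(\C_p)}$ carries $U_p\cap\Lie(A(\C_p))$ into $A(\C_p)$, a compatibility the paper simply asserts.
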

\begin{proof}
Put $\Phi_u:=\exp_{G(\C_p)}((\C_p\cdot u)\cap U_p)$. We first see that any two elements $x,y\in \C_p\cdot u$ commute, and therefore 
$$\exp_{G(\C_p)}(x)\exp_{G(\C_p)}(y)=\exp_{G(\C_p)}(x+y)=\exp_{G(\C_p)}(y+x)=\exp_{G(\C_p)}(y)\exp_{G(\C_p)}(x),$$
by Campbell-Hausdorff formula (see \cite[Section 16]{sch}). 
In particular, this shows that $\Phi_u$ is a commutative subgroup of $G(\C_p)$. It follows from \cite[Lemma 1.40]{m} that the Zariski closure $G_u$ of $\Phi_u$ is an algebraic subgroup of $G(\C_p)$.
We now consider the commutator morphism $f: G_u\times G_u\rightarrow G_u$ given by $f(x,y)=[x,y]$. Then $f$ is an algebraic morphism and trivial on $\Phi_u\times \Phi_u$. This gives $\Phi_u\times \Phi_u$ is contained in the algebraic set $f^{-1}(e)$, where $e$ denotes the identity element of $G$. But $\Phi_u\times \Phi_u$ is Zariski-dense in $G_u\times G_u$, this implies that the morphism $f$ must be trivial on $G_u\times G_u$. In other words, $G_u$ is commutative.
It remains to show $G_u$ is defined over $\Q$. If $u=0$ then $G_u$ is the trivial group, and hence clearly defined over $\Q$. Assume $u\ne 0$, let $\Gamma_u$ denote the image of the set $(\mathbb Q\cdot u)\cap U_p$ under the map $\exp_{G(\C_p)}$. As above, we also have $\Gamma_u$ is a subgroup of $G(\C_p)$, and therefore by \cite[Lemma 1.40]{m} again the Zariski closure $G'_u$ of $\Gamma_u$ in $G(\C_p)$ is an algebraic group. On the other hand, since $\exp_{G(\C_p)}(u)\in G(\Q)$, it follows that $\exp_{G(\C_p)}(u)\in G_u(\Q)$. Note that $\exp_{G(\C_p)}(mu)=\exp_{G(\C_p)}(u)^m$ for all $m\in \mathbb Z$, and this leads to $\Gamma_u$ is contained in $G_u(\Q)$. Hence, the algebraic group $G'_u$ must be defined over $\Q$. Moreover, the group $\Gamma_u$ is infinite (since $u\ne 0$), thus the algebraic group $G_u'$ has positive dimension. 

Next, one has $\exp_{G(\C_p)}(u)\in G'_u(\Q)\cap G(\C_p)_f\subseteq G'_u(\C_p)_f$. This gives 
$$u=\log_{G(\C_p)}\big(\exp_{G(\C_p)}(u)\big)=\log_{G'_u(\C_p)}\big(\exp_{G(\C_p)}(u)\big)\in \Lie(G'_u(\C_p)).$$
Since $\Lie(G'_u(\C_p))$ is a vector space over $\C_p$, it follows that the line $\C_p\cdot u$ is also contained in $\Lie(G'_u(\C_p))$. From this, we get 
$$\Phi_u\subseteq\exp_{G(\C_p)}\big(\Lie(G'_u(\C_p))\cap U_p\big)
= \exp_{G'_u(\C_p)}\big(\Lie(G'_u(\C_p))\cap U_p\big)\subseteq G'_u(\C_p).
$$
This allows us to deduce that $G_u(\C_p)\subset G'_u(\C_p)$. But, by definition, $G'_u(\C_p)$ is obviously contained in $G_u(\C_p)$. Therefore, we conclude that $G_u=G'_u$, and this completes the proof of the lemma.             

\section{Proof of the main theorem}
We are now ready to prove the main theorem. Let $\exp_{G(\C_p)}:U_p\rightarrow G(\C_p)$ be the $p$-adic exponential map of $G$ defined as in Section \ref{2}. By definition of the set $G(\C_p)_f$, there is a positive integer $k$ such that $\gamma^k\in U_p$. Let $u$ be the point $\log_{G(C_p)}(\gamma^k)$. We have 
$$\exp_{G(\C_p)}(u)=\exp_{G(\C_p)}\big(\log_{G(C_p)}(\gamma^k)\big)=\gamma^k\in G(\Q).$$
Using Lemma \ref{lm}, there exists a commutative algebraic subgroup $G_u$ of $G$ defined over $\Q$ of positive dimension with $\exp_{G(\C_p)}(u)\in G_u(\Q)$. This also gives $\exp_{G(\C_p)}(u)\in G_u(\C_p)_f$, and hence 
$$\log_{G(\C_p)}(\gamma^k)=u\in \Lie(G_u(\C_p))\subseteq \Lie(G(\C_p)).$$
On the other hand, it follows from \cite[Chapter III, 7.6]{bo} that
$$\log_{G(\C_p)}(\gamma^k)=k\log_{G(\C_p)}(\gamma)\in V_\gamma\otimes_{\Q}\C_p.$$ 
In particular, this implies that $V_{\gamma}\subseteq \Lie(G_u)$. This enables us to apply the $p$-adic analytic subgroup theorem (\cite[Theorem 2.2]{fp1}) to the commutative algebraic group $G_u$, the algebraic point $\gamma^k$ and the $\Q$-vector space $V_\gamma$ to obtain an algebraic (commutative) subgroup $H$ of $G$ defined over $\Q$ of positive dimension such that $\Lie(H)\subseteq V_{\gamma}$ and $\gamma^k\in H(\Q)$. Let $H^0$ be the connected component of $H$, then $\Lie(H^0)=\Lie(H)$ and there is a positive integer $m$ for which $(\gamma^k)^m\in H^0(\Q)$. Put $n=km$ and take the algebraic subgroup $H_{\gamma}$ of $G_u$ with $H_{\gamma}(\Q)=\{\alpha\in G_u(\Q): \alpha^n\in H^0(\Q)\}$. Then $\gamma\in H_\gamma$ and $\Lie(H_\gamma)\subseteq V_\gamma$. This leads to 
$$\log_{G(\C_p)}(\gamma)=\log_{H_\gamma(\C_p)}(\gamma)\in \Lie(H_\gamma(\C_p))=\Lie(H_\gamma)\otimes_{\Q}\C_p.$$ 
By the minimality of the $\Q$-vector space $V_\gamma$, we conclude that $\Lie(H_\gamma)=V_\gamma$. The theorem is therefore proved.
\end{proof}
\noindent {\bf Remark.} By the same method given in the proof of \cite[Theorem 1.5]{y} (which is deduced as a consequence of the non-commutative analytic subgroup theorem), we just use the $p$-adic exponential map instead of the complex one to also obtain the following result as a consequence of the $p$-adic non-commutative analytic subgroup theorem.
\begin{theorem}
Let $G$ be an algebraic group defined over $\Q$ and $\exp_{G(\C_p)}: U_p\rightarrow G(\C_p)$ be the $p$-adic exponential map of $G$. Let $u$ be a non-zero element of $U_p\cap \Lie(G)$ such that $\exp_{G(\C_p)}(u)$ lies in $G(\Q)$.
\\
{\rm (1)} The element $\exp_{G(\C_p)}(u)$ is contained in a subgroup isomorphic to the additive group $\mathbb G_a$.
\\
{\rm (2)} Assume $G$ is an affine algebraic group. The element $u$ is a nilpotent element of $\Lie(G(\C_p))$, and hence $\exp_{G(\C_p)}(u)$ is a unipotent element of $G(\C_p)$.
\end{theorem}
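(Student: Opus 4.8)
\noindent The plan is to deduce the theorem from Theorem~\ref{mth} in exactly the way \cite[Theorem 1.5]{y} is deduced from the complex non-commutative analytic subgroup theorem, the only change being that the complex exponential is replaced by the $p$-adic one. Set $\gamma:=\exp_{G(\C_p)}(u)$; then $\gamma\in\exp_{G(\C_p)}(U_p)\subseteq G(\C_p)_f$, we have $\gamma\in G(\Q)$ by hypothesis, and $\log_{G(\C_p)}(\gamma)=u\neq 0$ because $\log_{G(\C_p)}$ restricts to the inverse of $\exp_{G(\C_p)}$ on $U_p$. Replacing $G$ by its identity component $G^0$ --- which contains $\gamma$ (since $\exp_{G(\C_p)}$ maps into $G^0(\C_p)$), is still affine if $G$ is, and has $\Lie(G^0)=\Lie(G)$ --- we may assume $G$ connected; it has positive dimension because $u\neq 0$. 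Since $u$ already lies in the $\Q$-structure $\Lie(G)$, the smallest $\Q$-subspace $V_\gamma$ of $\Lie(G)$ with $u\in V_\gamma\otimes_\Q\C_p$ is the line $\Q\cdot u$: if $W\subseteq\Lie(G)$ is a $\Q$-subspace with $u\in W\otimes_\Q\C_p$, then, choosing a $\Q$-decomposition $\Lie(G)=W\oplus W'$, the $W'$-component of $u$ must vanish, so $u\in W$. Hence $\dim_\Q V_\gamma=1$, and Theorem~\ref{mth} applied to $G$, $\gamma$ and $V_\gamma$ yields a connected commutative algebraic subgroup $H_\gamma\subseteq G$ defined over $\Q$, of positive dimension, with $\gamma\in H_\gamma(\Q)$ and $\Lie(H_\gamma)=V_\gamma=\Q\cdot u$; in particular $\dim H_\gamma=1$.

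For part~(1), I would invoke the classification of one-dimensional connected commutative algebraic groups in characteristic zero: $H_\gamma$ is isomorphic to $\mathbb G_a$, to $\mathbb G_m$, or to an elliptic curve $E$. The latter two cases are excluded by a transcendence argument which stays inside the single tool already used in the paper, via the following device (the group-theoretic form of the $p$-adic Hermite--Lindemann theorem of Mahler, resp.\ of the $p$-adic analogue of Schneider's theorem). Suppose $H_\gamma\cong\mathbb G_m$ (the elliptic case is identical): form the connected commutative group $\widetilde G:=\mathbb G_a\times H_\gamma$, choose any nonzero $a\in\mathbb G_a(\Q)=\Q$, and set $\widetilde\gamma:=(a,\gamma)$. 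Then $\widetilde\gamma\in\widetilde G(\Q)\cap\widetilde G(\C_p)_f$ and $\log_{\widetilde G(\C_p)}(\widetilde\gamma)=(a,u)$ is a nonzero element of the $\Q$-structure $\Lie(\widetilde G)$ spanning the ``diagonal'' line $\ell:=\Q\cdot(a,u)$, so that, as in the previous paragraph, $V_{\widetilde\gamma}=\ell$. Applying the $p$-adic analytic subgroup theorem \cite[Theorem 2.2]{fp1} to $\widetilde G$, $\widetilde\gamma$ and $\ell$ produces a positive-dimensional connected algebraic subgroup $\widetilde H\subseteq\widetilde G$ over $\Q$ with $\Lie(\widetilde H)\subseteq\ell$, hence $\Lie(\widetilde H)=\ell$. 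But since $\mathrm{Hom}(\mathbb G_m,\mathbb G_a)=0$ (resp.\ $\mathrm{Hom}(E,\mathbb G_a)=0$) in characteristic zero, the only one-dimensional connected algebraic subgroups of $\mathbb G_a\times\mathbb G_m$ (resp.\ of $\mathbb G_a\times E$) are the two coordinate subgroups, whose Lie algebras $\Lie(\mathbb G_a)\oplus 0$ and $0\oplus\Lie(H_\gamma)$ both differ from $\ell$ because $a\neq 0$ and $u\neq 0$. This contradiction forces $H_\gamma\cong\mathbb G_a$, and then $\gamma\in H_\gamma(\Q)\subseteq G(\C_p)$ lies in a subgroup isomorphic to $\mathbb G_a$, which is part~(1).

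For part~(2), assume in addition that $G$ is affine. Then $H_\gamma\cong\mathbb G_a$ is a unipotent closed subgroup of $G$, and --- unipotency being preserved by homomorphisms of affine algebraic groups, and nilpotency by the induced maps of Lie algebras --- every $\C_p$-point of $H_\gamma$ is a unipotent element of $G(\C_p)$ while every element of $\Lie(H_\gamma)$ is a nilpotent element of $\Lie(G(\C_p))$. Since $\gamma\in H_\gamma(\C_p)$ and $u\in\Lie(H_\gamma)=\Q\cdot u$, it follows that $\exp_{G(\C_p)}(u)=\gamma$ is a unipotent element of $G(\C_p)$ and $u$ is a nilpotent element of $\Lie(G(\C_p))$.

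The only step carrying genuine content is the exclusion of $\mathbb G_m$ and of elliptic curves in part~(1); everything else is bookkeeping with $\Q$-structures, the classification of one-dimensional algebraic groups, and standard facts about unipotent subgroups. That exclusion is precisely where a transcendence input is unavoidable, and the $\mathbb G_a\times H_\gamma$ device funnels it through the $p$-adic analytic subgroup theorem already in use, exactly as Yafaev's proof of \cite[Theorem 1.5]{y} does over $\C$.
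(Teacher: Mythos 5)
Your proposal is correct and follows exactly the route the paper intends: the paper gives no proof beyond the remark that one repeats Yafaev's deduction of \cite[Theorem 1.5]{y} with the $p$-adic exponential in place of the complex one, and your reconstruction (reduce to $V_\gamma=\Q\cdot u$, apply Theorem~\ref{mth} to get a one-dimensional connected commutative $H_\gamma$, then rule out $\mathbb G_m$ and elliptic curves via the $\mathbb G_a\times H_\gamma$ device, i.e.\ the $p$-adic Hermite--Lindemann/Schneider argument) is precisely that deduction. The details you supply, including the identity-component reduction and the classification of one-dimensional subgroups of $\mathbb G_a\times\mathbb G_m$ and $\mathbb G_a\times E$, are all sound.
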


\vspace*{0.5cm}

\noindent{\sc Duc Hiep Pham}\\
University of Education\\Vietnam National University, Hanoi\\ 144 Xuan Thuy, Cau Giay, Hanoi\\ Vietnam\\
Email: {\sf phamduchiep@vnu.edu.vn}

\end{document}